\DeclarePairedDelimiterX\setc[2]{\{}{\}}{\,#1 \;\delimsize\vert\; #2\,}
\newtheorem{theorem}{Theorem}[section]
\newtheorem{proposition}[theorem]{Proposition}
\newtheorem{lemma}[theorem]{Lemma}
\theoremstyle{definition}
\newtheorem{remark}[theorem]{Remark}
\newtheorem{example}[theorem]{Example}
\newtheorem{definition}[theorem]{Definition}
\newcommand{\CC}{{\mathbb C}}
\newcommand{\RR}{{\mathbb R}}
\newcommand{\NN}{{\mathbb N}}
\begin{document}

\title[Higher capacities do not satisfy the symplectic Brunn-Minkowski inequality]{Higher index symplectic capacities do not satisfy the symplectic Brunn-Minkowski inequality}

\author{Ely Kerman}
\author{Yuanpu Liang}
\address{Department of Mathematics\\
University of Illinois at Urbana-Champaign\\
1409 West Green Street\\
Urbana, IL 61801, USA.}
\thanks{The first named author is supported by a grant from the Simons Foundation}

\date{\today}

\begin{abstract}
In \cite{ao},  Artstein-Avidan and Ostrover establish a symplectic version of the  classical Brunn-Minkowski inequality where the role of the volume is played by the Ekeland-Hofer-Zehnder capacity. Here we prove that this symplectic Brunn-Minkowski inequality fails to hold for all of  the higher index symplectic capacities defined by Gutt and Hutchings in \cite{gh}. 

\end{abstract}

\maketitle

\section{Introduction.}

The Brunn-Minkowski inequality asserts that if  $U_1$ and $U_2$  are two compact subsets of  $\RR^{n}$,  then 
\begin{equation*}
\label{bm}
{\mathrm{Volume}}(U_1+U_2)^{\frac{1}{n}} \geq {\mathrm{Volume}}(U_1)^{\frac{1}{n}}+{\mathrm{Volume}}(U_2)^{\frac{1}{n}},
\end{equation*}
where $U_1 + U_2 =\{x_1 + x_2 \mid x_1\in U_1,\, x_2 \in U_2\}$ is the Minkowski sum. 
In \cite{ao},  Artstein-Avidan and Ostrover prove a version of this fundamental geometric inequality in the context of symplectic geometry where the role of  the volume is played by a {\em symplectic capacity}, \cite{eh1}.  By now, many symplectic capacities have been defined with different tools and with different perspectives in mind, \cite{chls}. 
One common feature they share is that, in dimension two, they must be proportional to the volume (area), and hence must satisfy the Brunn-Minkowski inequality. However, in dimensions greater than two, every symplectic capacity  must take a finite positive value on the symplectic cylinder
\begin{equation*}
\label{cyl}
B^2(1) \times \RR^{2n-2} \subset (\RR^{2n}, \omega_{std})
\end{equation*}
and hence must be manifestly different from any measurement derived from the volume. In view of this latter fact, it is remarkable that, in \cite{ao}, the authors prove the following symplectic analogue of the Brunn-Minkowski inequality, 
\begin{equation}
\label{bmc}
c^{\mathrm{EHZ}}(K_1+K_2)^{\frac{1}{2}} \geq c^{\mathrm{EHZ}}(K_1)^{\frac{1}{2}}+c^{\mathrm{EHZ}}(K_2)^{\frac{1}{2}}.
\end{equation}
Here, $K_1$ and $K_2$ are {\em convex bodies} in $\RR^{2n}$ and $c^{\mathrm{EHZ}}$ is  the Ekeland-Hofer-Zehnder capacity.  

Given inequality \eqref{bmc}, and the many applications of it developed in \cite{ao}, it is natural to ask if this symplectic version of the Brunn-Minkowski inequality is another property shared by all symplectic capacities.
In this note, we settle this question in the negative.

\subsection{The Main Result}

The Ekeland-Hofer-Zehnder capacity is the first in an infinite sequence of symplectic capacities constructed by  Ekeland and Hofer in \cite{eh2}.  In the recent work \cite{gh},  Gutt and Hutchings use $S^1$-equivariant symplectic homology to construct another sequence of symplectic capacities that are conjectured to be equal to the Ekeland-Hofer capacities. Let $c_k$ denote the $k^{th}$ Gutt--Hutchings capacity.  Our main result is the following. 
 \begin{theorem}\label{one}  For all $k > 1$ and $n>1$ there are convex domains $K_1$ and $K_2$ in $\RR^{2n}$ such that  
 \begin{equation}
\label{nobmc}
c_k(K_1+K_2)^{\frac{1}{2}} < c_k(K_1)^{\frac{1}{2}}+c_k(K_2)^{\frac{1}{2}}.
\end{equation}
\end{theorem}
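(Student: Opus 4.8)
The strategy is to exhibit explicit convex domains for which the Gutt–Hutchings capacities $c_k$ can be computed exactly, and for which the Minkowski sum is "too efficient" to preserve the square-root superadditivity. The natural candidates are symplectic ellipsoids and polydisks, since for these domains the Gutt–Hutchings capacities have known closed-form expressions. Recall that for an ellipsoid $E(a_1,\dots,a_n)$ one has $c_k\bigl(E(a_1,\dots,a_n)\bigr)$ equal to the $k$-th smallest entry (with multiplicity) of the list $\setc{m\, a_i}{m\in\NN_{\geq 1},\, 1\leq i\leq n}$, while for a polydisk $P(a_1,\dots,a_n)=B^2(a_1)\times\cdots\times B^2(a_n)$ one has $c_k\bigl(P(a_1,\dots,a_n)\bigr)=k\cdot\min_i a_i$. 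The first step is to reduce to the case $n=2$: given a counterexample pair in $\RR^4$, one takes products with a large ball $B^{2n-2}(R)$ in the remaining factors; since capacities stabilize under such products (the extra factor contributes larger action values when $R$ is large) and Minkowski sum respects the product decomposition, the four-dimensional counterexample propagates to all $n>1$.

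The heart of the argument is the four-dimensional construction. I would try $K_1 = P(1, x)$ and $K_2 = P(x, 1)$ for a suitable parameter $x>1$ (or possibly tilted/rescaled variants, or one ellipsoid and one polydisk). The Minkowski sum of two such boxes is again a box, $P(1+x, 1+x)$, and more generally the Minkowski sum of an $\ell^\infty$-type product with its "transpose" inflates the small direction. One then computes: the left-hand side of \eqref{nobmc} is $\sqrt{c_k(P(1+x,1+x))} = \sqrt{k(1+x)}$, whereas the right-hand side is $\sqrt{k\cdot 1}+\sqrt{k\cdot 1} = 2\sqrt{k}$. Since $\sqrt{k(1+x)} < 2\sqrt{k}$ exactly when $1+x < 4$, i.e. $x<3$, this already gives the inequality for \emph{every} $k$ — but note this also works for $k=1$, which contradicts \eqref{bmc}, so polydisks alone cannot be right and I must have mislabeled which capacity $c^{\mathrm{EHZ}}$ is. The resolution is that $c^{\mathrm{EHZ}}$ on a polydisk equals $k=1$ of the Gutt–Hutchings list, and \eqref{bmc} does hold there ($\sqrt{1+x}\geq 2$ fails for $x<3$?)—so in fact the correct move is to pick domains where $c_1$ and $c_k$ genuinely diverge: e.g. $K_i$ should be chosen so that $c_1(K_i)$ is governed by a "short" direction but $c_k(K_i)$ for $k>1$ picks up a multiple of a "long" direction, while $c_k(K_1+K_2)$ does not grow proportionally. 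Concretely, take ellipsoids $E(1,N)$ with $N$ large: $c_1 = 1$ but $c_k = k$ for $k\leq N$ (from multiples of the short axis $1$), and this doesn't separate things either — so the real subtlety is finding a shape where the Minkowski sum's \emph{short} direction is determined by an averaging that beats the $k$-fold multiple.

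The main obstacle, therefore, is the combinatorial/geometric optimization: one must find convex bodies $K_1,K_2$ such that the $k$-th Gutt–Hutchings action spectrum of $K_1+K_2$ is strictly smaller than $(\sqrt{\text{spec}_k(K_1)}+\sqrt{\text{spec}_k(K_2)})^2$. I expect the right examples are \emph{ellipsoids} $E(1,a)$ and $E(1,a)$, or $E(a,1)$ and $E(1,a)$, with $a$ chosen in a window where $c_k$ jumps from being controlled by $k\cdot 1$ to being controlled by $\lceil k/2\rceil\cdot a$ or similar, combined with the fact (to be proved as the key lemma) that $c_k(E(1,a)+E(1,a))$ — the Minkowski sum of an ellipsoid with itself, which is $E(2,2a)$ when they are parallel, but a genuinely new convex body $E(a,1)+E(1,a)$ when "transposed" — has small $k$-th capacity because its inscribed/computed Reeb orbit of index $k$ wraps the inflated short direction. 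The technical work will be: (i) establish or cite the exact formula for $c_k$ on the relevant non-ellipsoidal Minkowski sums (this may require the monotonicity and the fact that $c_k$ of a convex body is an "action selector" realized by a closed characteristic, together with an explicit construction of such a characteristic on the boundary of the sum), and (ii) optimize the parameters $a$ and track which value of $k$ first produces the strict inequality, verifying that it persists for all $k>1$. I anticipate step (i) — pinning down $c_k$ on the Minkowski sum rather than merely bounding it — to be the crux, resolved either via a direct analysis of the Reeb dynamics on $\partial(K_1+K_2)$ or via a clean product/embedding trick that reduces the sum back to a polydisk or ellipsoid whose capacities are tabulated.
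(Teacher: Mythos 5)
Your high-level plan is sound and matches the paper on the reduction step: the four-dimensional case suffices because $c_k(X\times B^{2m}(R))=c_k(X)$ for $R$ large (using the product formula $c_k(X\times Y)=\min_{i+j=k}\{c_i(X)+c_j(Y)\}$ for convex toric domains), and Minkowski sum commutes with the product decomposition. Your instinct to use ellipsoids $E(a,1)$ and $E(1,a)$ is also close to the paper's construction: for even $k$ it takes $E(1+1/k,1)+E(1,1+1/k)$, and for odd $k$ it takes $E(1,1)+E(1-1/k,1)$. But the step you explicitly flag as the crux --- computing, rather than merely bounding, $c_k$ of the Minkowski sum of two symplectic ellipsoids --- is in fact the whole content of the argument, and you have not supplied it. Neither of your two proposed routes (analyzing Reeb dynamics on $\partial(K_1+K_2)$ directly, or reducing the sum to a polydisk or ellipsoid) is what works. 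The missing idea is that $\overline{E(a,b)+E(c,d)}$ is a \emph{convex toric domain} $\mu^{-1}(\Omega)$, which one shows via the Chirikjian--Yan closed-form parameterization of $\partial(E_1+E_2)$; the Gutt--Hutchings capacity of a convex toric domain is $c_k(\mu^{-1}(\Omega))=\min\{\|v\|^*_\Omega : v_i\in\ZZ_{\geq 0},\sum v_i=k\}$, and for this particular $\Omega$ the support function $\|v\|^*_\Omega$ becomes a one-variable maximization over the parameter $\psi\in[0,\pi/2]$ that admits a closed-form answer (the paper's Lemma~2.5). Without this, you cannot evaluate the left-hand side of \eqref{nobmc}, and no amount of parameter optimization will close the argument.

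A secondary but instructive problem in your exploratory calculation: Minkowski summation of balls adds \emph{radii}, not areas, so in the area convention $P(1,x)+P(x,1)=P((1+\sqrt{x})^2,(1+\sqrt{x})^2)$, not $P(1+x,1+x)$. With this correction, the comparison you wrote becomes $\sqrt{k}\,(1+\sqrt{x})<2\sqrt{k}$, which requires $x<1$ --- inconsistent with your normalization --- so in fact polydisks \emph{never} violate the inequality \eqref{nobmc}, for any $k$. The anomaly you noticed, that your computation seemed to violate Artstein-Avidan--Ostrover's result \eqref{bmc} at $k=1$, is not a mislabeling of $c^{\mathrm{EHZ}}$; it is this arithmetic slip. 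This matters because the failure of the Brunn--Minkowski inequality for $c_k$ ($k>1$) really is specific to the interplay between the toric geometry of $\partial(E(a,b)+E(c,d))$ and the combinatorics of the lattice vectors $v$ with $\sum v_i=k$ entering the Gutt--Hutchings formula --- it does not come for free from the scaling of polydisk capacities, and distinguishing these is exactly what your proposal leaves unresolved.
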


\medskip

To prove Theorem \ref{one} we first show that  it can be reduced to the four dimensional case. We then construct two families of examples that realize the stated inequality; one for even values of $k$ and the other for odd values of $k$. These examples involve the Minkowski sums of symplectic ellipsoids. The required computation of the capacities of these sums is made possible by two ingredients. The first is Gutt and Hutching's formula for the capacity of convex toric domains from \cite{gh}. The second ingredient is the elegant parameterization of  the boundary of the Minkowski sum of two ellipsoids  derived  by Chirikjian and Yan in \cite{cy}. In the next section we recall these formulae, in the relevant settings, and use them to compute the capacities of the sums of four dimensional symplectic ellipsoids. The proof of Theorem \ref{one} is contained the  third section of the paper. In the final section of the paper we describe an argument due to Ostrover which shows that, in each dimension, all but finitely many cases of Theorem 1 can be established using a result from \cite{ao}. This result (see Theorem \ref{mean} below) establishes an upper bound for the capacity of centrally symmetric convex bodies in terms of their mean-width, whenever the capacity satisfies the symplectic  Brunn-Minkowski inequality.

\begin{remark}
One property of the Ekeland-Hofer-Zehnder capacity that plays a crucial role in the proof of \eqref{bmc} is that, for a convex body $K$, the capacity $c^{\mathrm{ EHZ}}(K)$ can be defined as the {\em minimum} value of a functional. In particular,  $c^{\mathrm{EHZ}}(K)$ is the minimum action of a closed characteristic on the boundary of $K$. Morally speaking, the higher index Ekeland-Hofer capacities of $K$ correspond to the critical values of saddle points. It would be interesting to know if any capacity which satisfies inequality \eqref{bmc} must be proportional to the Ekeland-Hofer-Zehnder capacity on convex sets. 
\end{remark}

\subsection*{Acknowledgements}
The authors thank Yaron Ostrover for his helpful comments and for the argument which appears in the final section of the paper.

\section{Formulae}

\subsection{The Chirikjian--Yan parameterization} Let $B^n(1)$ be the open unit ball in $\RR^n$. For each ellipsoid $E$ in $\RR^n$ there is a unique $n \times n$ matrix $A$ such that $E=A(B^n(1))$.  For any two ellipsoids $E_1,\,E_2 \subset \RR^n$, the map $CY \colon S^{n-1} \to \partial(E_1 +E_2) $ defined by 
\begin{equation}
\label{cy}
CY(x) = A_1 x +A_2\left( \frac{A_2^TA_1^{-1}(x)}{\| A_2^TA_1^{-1}(x)\|} \right)
\end{equation}
is a diffeomorphism. We refer the reader to \cite{cy} for the geometric derivation of this formula.

\begin{example}\label{alligned} Let $E(a, b)$, $E(c, d)$ be two symplectic ellipsoids in $\RR^4 =\CC^2 $ where $$E(a,b) =\setc*{ (z_1,z_2) \in \CC^2}{\frac{ |z_1|^2}{a^2} + \frac{ |z_2|^2}{b^2}  <1}.$$
Label the points $z \in S^3$ using  polar coordinates in $\RR^4=\CC \times \CC$, so that 
$$
z=(\cos \psi, \theta_1, \sin \psi, \theta_2)
$$
for unique values of $\theta_1,\, \theta_2 \in [0, 2\pi)$ and $\psi \in [0, \pi/2].$ For symplectic ellipsoids, the map $CY$ preserves the two complex factors and fixes the angles $\theta_1$ and $\theta_2$. Formula \eqref{cy} implies that the points on $\partial(E(a,b) +E(c,d))$ are of the form
$$
(g(\psi), \theta_1, h(\psi), \theta_2)
$$
where 
$$g(\psi)=\cos \psi \left( a+ \frac{c^2}{a f(\psi)}\right),$$ $$h(\psi)=\sin \psi \left( b+ \frac{d^2}{b f(\psi)}\right),$$ and
\begin{equation}
\label{f}
f(\psi)=\sqrt{\frac{c^2}{a^2} \cos^2 \psi + \frac{d^2}{b^2} \sin^2 \psi}. 
\end{equation}
\end{example}

Let  $\mu \colon \CC^n \to \RR^n$ be the standard moment map $\mu(z_1, \dots z_n) =(\pi |z_1|^2,\dots, \pi |z_n|^2)$. A {\em toric domain} is a subset of $\RR^{2n}$ of the form $\mu^{-1}(\Omega)$ for some subset $\Omega$ of $\RR^n$. In Example \ref{alligned},  $\partial(E(a,b) +E(c,d))$ is the boundary of the toric domain $\mu^{-1}(\Omega)$ where $\Omega \subset \RR^2$ is the region bounded by the coordinate axes and the image of the curve  $\psi \mapsto(\pi (g(\psi))^2, \pi(h(\psi))^2)$, see Figure \ref{omega},.

\begin{figure}[!h]
\centering
\caption{The region $\Omega$ for which $E(a,b)+E(c,d)= \mu^{-1}(\Omega)$.}
\bigskip
  \includegraphics[width=9cm]{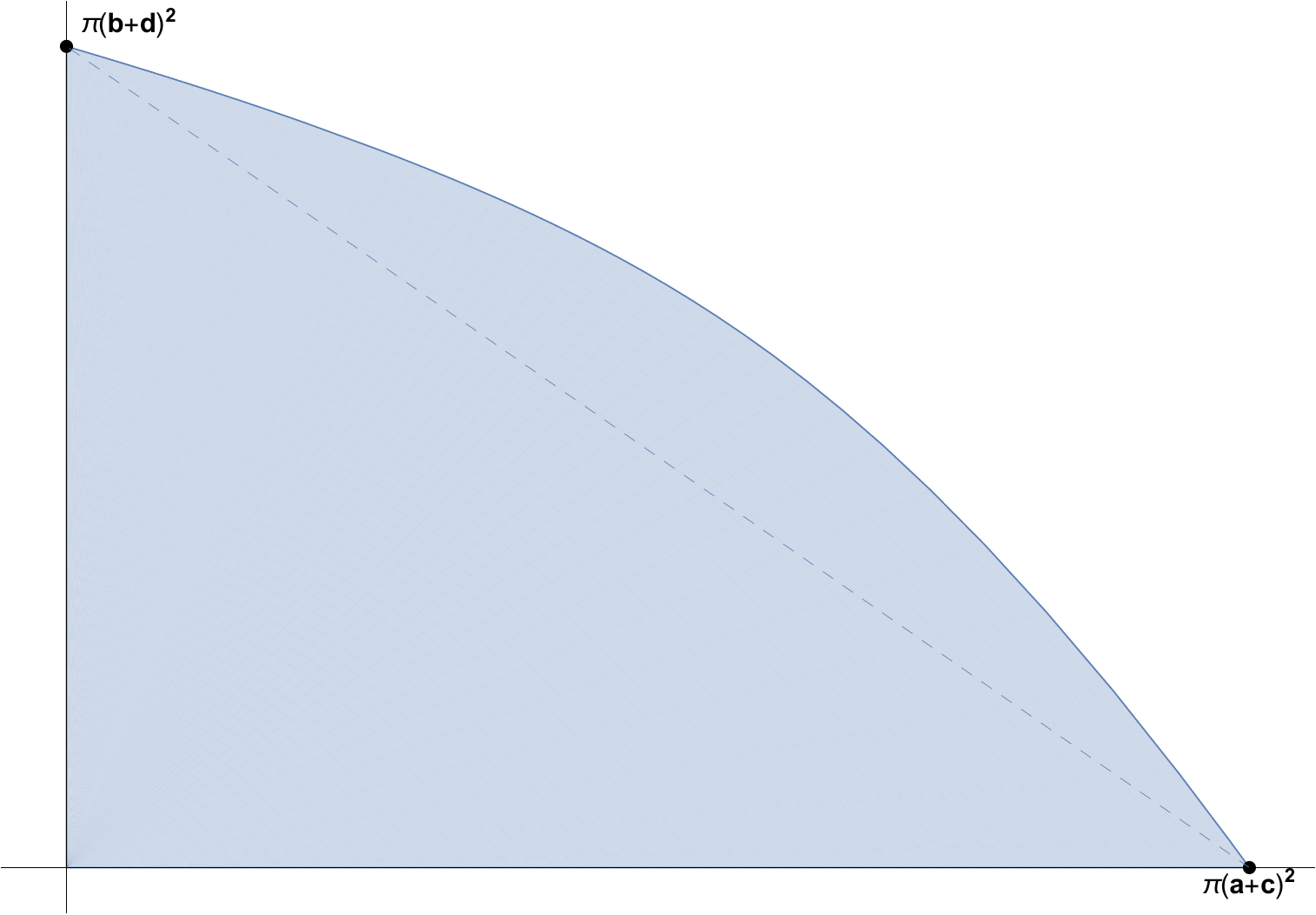}
  \label{omega}
\end{figure}

\begin{definition}[\cite{gh}, Definition 1.4]
A toric domain $\mu^{-1}(\Omega) \subset \RR^{2n}$ is {\em convex} if the region $$\{(x_1, \dots, x_n) \in \RR^n) \,\,\colon \,\,(|x_1|, \dots, |x_n|) \in \Omega\}$$
is compact and convex.
\end{definition}

\begin{lemma} The (closure of the) Minkowski sum $E(a,b) + E(c,d)$ is a convex toric domain in the sense of \cite{gh}.
\end{lemma}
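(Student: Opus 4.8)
The plan is to verify the two requirements in the definition of a convex toric domain recalled above for the region $\Omega$ introduced just before Figure \ref{omega}, namely that
$\widehat\Omega := \{(x_1,x_2)\in\RR^2 : (|x_1|,|x_2|)\in\Omega\}$
is compact and convex. Compactness is immediate: $E(a,b)$ and $E(c,d)$ are bounded, so $\Omega$ (the $\mu$-image of the bounded set $\overline{E(a,b)+E(c,d)}$) is a compact subset of $\RR^2$ and $\widehat\Omega$ is closed and bounded. For convexity, observe that $\widehat\Omega$ is invariant under the reflections $x_1\mapsto-x_1$ and $x_2\mapsto-x_2$; such a set is convex provided its first-quadrant piece $\Omega$ is convex \emph{and} its boundary curve $\gamma(\psi)=(\pi g(\psi)^2,\pi h(\psi)^2)$ is the graph of a nonincreasing function $x_2=\phi(x_1)$, because then $\phi(|x_1|)$ is concave and the four reflected arcs fit together into a convex boundary. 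So the proof reduces to two facts about the curves $g,h$ of Example \ref{alligned}.

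The first is monotonicity. Differentiating, one obtains the clean identities $\frac{d}{d\psi}\bigl(\frac{\cos\psi}{f}\bigr)=-\frac{(d/b)^2\sin\psi}{f^3}$ and $\frac{d}{d\psi}\bigl(\frac{\sin\psi}{f}\bigr)=\frac{(c/a)^2\cos\psi}{f^3}$ for $f$ as in \eqref{f}, whence $g'(\psi)=-\sin\psi\bigl(a+\tfrac{c^2d^2}{ab^2f^3}\bigr)<0$ and $h'(\psi)=\cos\psi\bigl(b+\tfrac{c^2d^2}{a^2bf^3}\bigr)>0$ on $(0,\pi/2)$. Thus $\pi g^2$ decreases from $\pi(a+c)^2$ to $0$ while $\pi h^2$ increases from $0$ to $\pi(b+d)^2$, so $\gamma$ is a strictly decreasing graph, as needed. (Conceptually this is forced by convexity: the real slice $(E(a,b)+E(c,d))\cap\RR^2$ is a centrally symmetric convex body, so its boundary arc from the rightmost to the topmost point is automatically a decreasing graph in the coordinates $(|z_1|,|z_2|)$; the delicate point is whether concavity of $\Omega$ survives the nonlinear rescaling $(|z_1|,|z_2|)\mapsto(\pi|z_1|^2,\pi|z_2|^2)$ of the axes.)

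The second is the concavity of $\phi$, equivalently of $H:=h^2$ as a function of $G:=g^2$. From the formulas above one finds $h'/g'=-\tfrac ba\cot\psi$, and after simplification
\[
\frac{dH}{dG}=\frac{hh'}{gg'}=-\,\frac{b^2 f(\psi)+d^2}{a^2 f(\psi)+c^2}.
\]
Since $\frac{d}{df}\frac{b^2f+d^2}{a^2f+c^2}=\frac{b^2c^2-a^2d^2}{(a^2f+c^2)^2}$ has a fixed sign and $f(\psi)$ is monotone in $\psi$ (running between $c/a$ and $d/b$), $\frac{dH}{dG}$ is monotone in $\psi$; a short check of the two cases $ad>bc$ and $ad<bc$ shows it is always nondecreasing in $\psi$, hence nonincreasing in $G$ because $G$ decreases in $\psi$. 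Therefore $H$ is concave in $G$, so $\Omega$, and with it $\widehat\Omega$, is convex.

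The step I expect to be the crux is this last piece of sign bookkeeping: a priori the monotonicity of $f$ in $\psi$ and the monotonicity of $f\mapsto\frac{b^2f+d^2}{a^2f+c^2}$ could pull against one another and spoil concavity, and one must see that they never do. This follows from the observation that $\frac{d^2-c^2}{b^2-a^2}-\frac{c^2}{a^2}$ and $\frac{d^2-c^2}{b^2-a^2}-\frac{d^2}{b^2}$ differ only by the positive factor $a^2/b^2$ (each equals $a^2d^2-b^2c^2$ divided by a positive multiple of $b^2-a^2$), so $\frac{d^2-c^2}{b^2-a^2}$ never lies strictly between $c^2/a^2$ and $d^2/b^2$; in the remaining case $ad=bc$ the Minkowski sum is just $E(a+c,b+d)$ and there is nothing to check. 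One can also sidestep the case split by parametrizing the real-slice boundary of $E(a,b)+E(c,d)$ by its outer normal angle $\eta$: then $dh/dg=-\cot\eta$ is automatic from convexity, $\frac{dH}{dG}=-\frac{b^2 w_2+d^2 w_1}{a^2 w_2+c^2 w_1}$ with $w_1,w_2$ the support functions of the two ellipses, and the same ``never strictly between'' observation yields the monotonicity in one line.
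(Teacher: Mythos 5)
Your proof is correct and follows essentially the same route as the paper: parametrize $\partial\Omega$ as $(\pi g^2,\pi h^2)$, show $g'<0$, and show $\frac{d(h^2)}{d(g^2)}=-\frac{b^2f+d^2}{a^2f+c^2}$ is monotone in the right direction. The paper differs in two small ways: it first reduces WLOG to the case $c/a<d/b$ (treating $c/a=d/b$ as the trivial ellipsoid case), and it establishes concavity by differentiating once more, obtaining $C''(\pi g^2)=\frac{1}{2\pi gg'}\cdot\frac{a^2b^2}{f(a^2f+c^2)^2}\bigl(\tfrac{c^2}{a^2}-\tfrac{d^2}{b^2}\bigr)^2\cos\psi\sin\psi$, where the appearance of a perfect square makes the sign immediate. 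Your version avoids the WLOG step by noticing the cancellation of signs between $f'(\psi)$ and $\partial_f\frac{b^2f+d^2}{a^2f+c^2}$ (both proportional to $a^2d^2-b^2c^2$ up to positive factors), which is a perfectly valid alternative; you also spell out the reflection/compactness step that the paper leaves implicit. One caveat: the auxiliary observation about $\frac{d^2-c^2}{b^2-a^2}$ ``never lying strictly between $c^2/a^2$ and $d^2/b^2$'' is not actually what drives the sign cancellation in the $\psi$-parametrization (it is also undefined when $a=b$); what you use and what suffices is the simpler fact that both $f'(\psi)$ and $\partial_f\frac{b^2f+d^2}{a^2f+c^2}$ change sign together with $a^2d^2-b^2c^2$.
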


\begin{proof} The Minkowski sum of a symplectic ellipsoid and a multiple of itself is just another symplectic ellipsoid. Hence the result is automatic when $\frac{c}{a} = \frac{d}{b}$. Relabelling if necessary, we may therefore assume that 
\begin{equation}
\label{cd}
\frac{c}{a} < \frac{d}{b}.
\end{equation} Let $C$ be the function defined by
\begin{equation}
\label{C}
C(\pi (g(\psi))^2) =\pi (h(\psi))^2. 
\end{equation}
It suffices to prove that, for all $\psi \in (0, \pi / 2)$,  the functions  $C'(\pi (g(\psi))^2)$ and $C''(\pi (g(\psi))^2)$ are both negative. 

We first observe that, by \eqref{cd}, the function 
\begin{equation*}
\label{ }
g'(\psi) = -\sin \psi \left( a+ \frac{c^2}{a f(\psi)} \right) +\cos^3 \psi \sin \psi \frac{c^2}{af^3} \left(\frac{c^2}{a^2} - \frac{d^2}{b^2}\right)
\end{equation*}
is negative on $(0, \pi / 2)$.  Equation \eqref{C} then implies that 
\begin{equation*}
\label{ }
C'(\pi g^2) = \frac{h h'}{g g'}
\end{equation*}
on $(0, \pi / 2)$, and a straightforward computation yields
\begin{equation}
\label{h/g}
\frac{h h'}{g g'} = -\frac{b^2 f +d^2}{a^2f +c^2}.
\end{equation}
Hence, $C'$ is negative in the desired domain. 

As for the second derivative, equations \eqref{C} and \eqref{h/g} imply that 
\begin{eqnarray*}
C''(\pi g^2) & = & \frac{-1}{2\pi g g'}\left(\frac{b^2 f +d^2}{a^2f +c^2}\right)' \\
{} & = &  \frac{1}{2\pi g g'} \frac{a^2 b^2}{(a^2f +c^2)^2} \left(\frac{c^2}{a^2} -\frac{d^2}{b^2}\right)^2 \cos\psi \sin \psi.
\end{eqnarray*}
This is negative on the desired range since $g'$ is.
\end{proof}

\subsection{The Gutt-Hutchings formula} In \cite{gh}, the authors prove that  the capacities of any convex toric domain  $\mu^{-1}(\Omega)$ in  $\RR^{2n}$ are given by
\begin{equation*}
\label{ }
c_k (\mu^{-1}(\Omega)) = \min \setc*{ \|v\|^*_{\Omega}}{v=(v_1, \dots, v_n),\, v_j \in \{0\} \cup \mathbb{N}, \, \sum_1^n v_j =k}
\end{equation*}
where
\begin{equation*}
\label{ }
\|v\|^*_{\Omega} =\max\setc*{ \langle v,w \rangle}{w \in \Omega}.
\end{equation*}

\begin{example}[\cite{gh}, Example 1.9] The closure of the ellipsoid $E(a,b)$ is equal to $\mu^{-1}(\Omega)$ for 
$$
\Omega =\setc*{(x_1,x_2) \in \RR^2_{\geq 0}}{\frac{x_1}{\pi a^2} + \frac{x_2}{\pi b^2} \leq1}.
$$
In this case 
\begin{equation*}
\label{ }
\|v\|^*_{\Omega} =\max\{ \pi a^2 v_1,\, \pi b^2 v_2\}.
\end{equation*}
and 
\begin{equation*}
\label{ellipse}
c_k (E(a,b)) = \left(\mathrm{Sort}\left\{ \NN \pi a^2 \cup \NN \pi b^2 \right\}\right)[k],
\end{equation*}
the $k^{th}$ element in the sequence of positive integter multiples of $\pi a^2$ and $\pi b^2$ ordered by size with repetitions.
\end{example}

\subsection{Computing $c_k(E(a,b) + E(c,d))$}
For  $\Omega = \mu(E(a,b) + E(c,d))$ as in Example \ref{alligned}, we have 
\begin{equation}
\label{max}
\|v\|^*_{\Omega} =\max_{\psi \in [0, \pi/2]}\{\pi v_1(g(\psi))^2+\pi v_2(h(\psi))^2\}
 \end{equation}

The next result can be used to transform this, and hence the formula for  $c_k(E(a,b)+ E(c,b))$, into a direct computation. 

\begin{lemma}\label{direct} Let $\Omega = \mu(E(a,b)+ E(c,d))$ and assume that $\frac{c}{a}< \frac{d}{b}$. If  
 $\displaystyle{\frac{v_1c^2-v_2d^2}{v_2b^2-v_1a^2}}$ is in  the interval $\left( \frac{c}{a},\frac{d}{b}\right)$ and $v_2b^2-v_1a^2<0$, then 
 \begin{equation*}
\label{ }
\|v\|^*_{\Omega} = \pi \left( b^2c^2 - a^2d^2 \right)v_1v_2\left( \frac{1}{v_1c^2-v_2d^2} +  \frac{1}{v_2b^2-v_1a^2}\right).
\end{equation*}
Otherwise, 
\begin{equation*}
\label{ }
\|v\|^*_{\Omega} =\max\left\{ v_1 \pi (a+c)^2,\, v_2 \pi (b+d)^2\right\}.
\end{equation*}

\end{lemma}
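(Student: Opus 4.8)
The plan is to reduce the computation of $\|v\|^*_\Omega$ to the elementary analysis of one function of one variable. Set
$$
F(\psi) \eqdef \pi v_1 (g(\psi))^2 + \pi v_2 (h(\psi))^2 ,
$$
so that by \eqref{max} we have $\|v\|^*_\Omega = \max_{\psi \in [0,\pi/2]} F(\psi)$. Differentiating and using the identity \eqref{h/g}, together with the fact, established in the proof of the preceding lemma, that $g'<0$ on $(0,\pi/2)$, one finds
$$
F'(\psi) = 2\pi g(\psi)g'(\psi)\left( v_1 - v_2\,\frac{b^2 f(\psi)+d^2}{a^2 f(\psi)+c^2}\right)
$$
on $(0,\pi/2)$. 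Since $g g' < 0$ and $a^2 f + c^2 > 0$ there, the sign of $F'(\psi)$ equals the sign of $(v_2 b^2 - v_1 a^2) f(\psi) - (v_1 c^2 - v_2 d^2)$.

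Next I would record two routine facts: by \eqref{f} and \eqref{cd}, $f$ is a strictly increasing diffeomorphism of $[0,\pi/2]$ onto $[c/a,\,d/b]$ (its derivative is $f'(\psi)=\tfrac{\sin\psi\cos\psi}{f(\psi)}(\tfrac{d^2}{b^2}-\tfrac{c^2}{a^2})>0$), and, since $f(0)=c/a$ and $f(\pi/2)=d/b$, the endpoint values of $F$ are $F(0)=\pi v_1(a+c)^2$ and $F(\pi/2)=\pi v_2(b+d)^2$. In particular $F'$ is an affine function of the monotone function $f$, so it changes sign at most once; hence $F$ is either monotone on $[0,\pi/2]$, or has a single interior maximum, or has a single interior minimum. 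Running through the possibilities for the affine map $t\mapsto (v_2 b^2 - v_1 a^2)t - (v_1 c^2 - v_2 d^2)$ on $[c/a,d/b]$ — slope positive, negative, or zero, and its zero lying to the left of, inside, or to the right of the interval — I would check that in every case $F$ attains its maximum at an endpoint, which then equals $\max\{\pi v_1(a+c)^2,\,\pi v_2(b+d)^2\}$ (when $F$ is monotone this is automatic, since then $\max_{[0,\pi/2]}F=\max\{F(0),F(\pi/2)\}$), with exactly one exception: an interior maximum. That exception occurs precisely when the affine map is strictly decreasing, i.e. $v_2 b^2 - v_1 a^2 < 0$, and vanishes at an interior point, i.e. $t^\ast\eqdef\tfrac{v_1 c^2 - v_2 d^2}{v_2 b^2 - v_1 a^2}\in(c/a,d/b)$ — which is exactly the hypothesis of the lemma. (Note the interior-minimum case also has $t^\ast\in(c/a,d/b)$ but the opposite slope sign, so it correctly lands in the ``otherwise'' clause.)

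It then remains to evaluate $F(\psi^\ast)$, where $\psi^\ast$ is the interior critical point, characterized by $f(\psi^\ast)=t^\ast$, equivalently by the relation $v_1(a^2 f(\psi^\ast)+c^2)=v_2(b^2 f(\psi^\ast)+d^2)$; call this common value $M$. I would avoid brute force here. Writing $g^2=\cos^2\psi\,(a^2 f+c^2)^2/(a^2 f^2)$ and $h^2=\sin^2\psi\,(b^2 f+d^2)^2/(b^2 f^2)$ from Example~\ref{alligned}, substituting $v_1(a^2 f+c^2)=v_2(b^2 f+d^2)=M$, and using \eqref{f} in the form $\tfrac{c^2}{a^2}\cos^2\psi+\tfrac{d^2}{b^2}\sin^2\psi=f^2$, the cross terms collapse and one obtains the clean identity $F(\psi^\ast)=\pi M\,(1+f^\ast)/f^\ast$ with $f^\ast=f(\psi^\ast)$. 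Finally, substituting $M=v_1(a^2 f^\ast+c^2)$ and $f^\ast=t^\ast$ and simplifying with $a^2(v_1 c^2 - v_2 d^2)+c^2(v_2 b^2 - v_1 a^2)=v_2(b^2 c^2-a^2 d^2)$ gives the stated formula. The main obstacle is not any single computation but the bookkeeping in the case analysis: one must verify that every degenerate configuration (slope zero, or $v_1=0$, or $v_2=0$) is correctly absorbed into the ``otherwise'' clause, and that the interior-minimum case does not spuriously produce the interior-critical-point formula.
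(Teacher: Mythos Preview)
Your argument is correct and follows essentially the same route as the paper: both reduce $\|v\|^*_\Omega$ to a one-variable optimization, locate the unique possible interior critical point at $f=\tfrac{v_1c^2-v_2d^2}{v_2b^2-v_1a^2}$, determine via a sign/second-order check that it is a maximum exactly under the stated hypotheses, and otherwise read off the endpoint values. The only cosmetic differences are that the paper first changes variables to $f$ and differentiates $S(f)$ directly, whereas you stay in the $\psi$-variable and recycle the identity \eqref{h/g}; your evaluation of the critical value via the common quantity $M=v_1(a^2f^\ast+c^2)=v_2(b^2f^\ast+d^2)$ and the collapse $F(\psi^\ast)=\pi M(1+f^\ast)/f^\ast$ is a tidy shortcut that the paper omits.
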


\begin{proof}
Let $\Delta = \frac{d}{b} -\frac{c}{a}.$ By \eqref{f} we have $$\sin^2\psi =  \frac{1}{\Delta}\left(f^2 -\frac{c^2}{a^2}\right) \text{ and } \cos^2\psi =  \frac{1}{\Delta}\left(\frac{d^2}{b^2}-f^2\right).$$
Since the function $f(\psi)$ is monotonically increasing on $[0, \pi/2]$ and goes from $\frac{c}{a}$ to $\frac{d}{b}$, equation \eqref{max} can then be rewritten as 
\begin{equation*}
\label{}
\|v\|^*_{\Omega} =\max_{f \in [\frac{c}{a},\frac{d}{b}]}\left\{\underbrace{\pi v_1a^2\frac{1}{\Delta}\left(\frac{d^2}{b^2}-f^2\right)\left(1+ \frac{c^2}{a^2f}\right)^2+\pi v_2b^2\frac{1}{\Delta}\left(f^2+\frac{c^2}{a^2}\right)\left(1+ \frac{d^2}{b^2f}\right)^2}_{S(f)}\right\}.
 \end{equation*}
 A straightforward computation yields 
 \begin{equation*}
\label{ }
S'(f) = \frac{2\pi}{\Delta f^3} \frac{1}{v_2b^2-v_1a^2}\left[f^3 \left(f-\frac{v_1c^2-v_2d^2}{v_2b^2-v_1a^2}\right) + \frac{c^2d^2}{a^2b^2}\left(f-\frac{v_1c^2-v_2d^2}{v_2b^2-v_1a^2}\right) \right].
\end{equation*}
So, if $f_0 = \frac{v_1c^2-v_2d^2}{v_2b^2-v_1a^2}$ is contained in $\left(\frac{c}{a},\frac{d}{b}\right)$, then it is the unique critical point of $S$ in this interval. Moreover, since
 \begin{equation*}
\label{ }
S''(f_0) = \frac{2\pi}{\Delta f_0^3} \frac{1}{v_2b^2-v_1a^2}\left[f_0^3  + \frac{c^2d^2}{a^2b^2}\right], 
\end{equation*}
the critical point $f_0$ is a global maximum if $v_2b^2-v_1a^2 <0$ and we have
 \begin{equation*}
\label{ }
\|v\|^*_{\Omega} =S(f_0) = \pi \left( b^2c^2 - a^2d^2 \right)v_1v_2\left( \frac{1}{v_1c^2-v_2d^2} +  \frac{1}{v_2b^2-v_1a^2}\right). 
\end{equation*} 
On the other hand, if $f_0 = \frac{v_1c^2-v_2d^2}{v_2b^2-v_1a^2}$ is not contained in $\left(\frac{c}{a},\frac{d}{b}\right)$ or if $v_2b^2-v_1a^2 \geq 0$, then $$\|v\|^*_{\Omega} =\max_{f \in [\frac{c}{a},\frac{d}{b}]}\left\{S(f)\right\} = \max \left\{S\left(\frac{c}{a}\right),S\left(\frac{d}{b}\right)\right\}=\max\left\{ v_1 \pi (a+c)^2,\, v_2 \pi (b+d)^2\right\}.$$

\end{proof}

\begin{remark}
The Minkowski sum  $E(a, b)+ E(c,d)$ naturally contains the symplectic ellipsoid $E(a+c, b+d)$ and the boundaries of these domains intersect exactly along the $2$ standard embedded closed Reeb orbits on $\partial E(a+c, b+d)$. (See Figure \ref{omega}.) The monotonicity property of the capacities $c_k$ implies that  $$c_k(E(a,b)+ E(c,d)) \geq c_k(E(a+c, b+d)),$$  for all $k \in \mathbb{N}$. With this in mind one can use Lemma \ref{direct} to determine when this inequality is strict. One simply applies the criteria of the lemma to the vector $v=(v_1,k-v_1)$ for which 
$$c_k(E(a+c, b+d))=\|v\|^*_{\mu(E(a+c, b+d))}.$$  If  
 $\displaystyle{\frac{v_1c^2-(k-v_1)d^2}{(k-v_1)b^2-v_1a^2}}$ is in  $\left( \frac{c}{a}, \frac{d}{b}\right)$ and $(k-v_1)b^2<v_1a^2$, then $c_k(E(a,b)+ E(c,d))> c_k(E(a+c, b+d)).$

\end{remark}

\begin{remark}
Let $\mathrm{GW}(K)$ denote the Gromov width of $K$. By monotoncity, we have $$\mathrm{GW}(E(a,b)+ E(c,d)) \geq \mathrm{GW}(E(a+c, b+d)).$$ Since $\mathrm{GW}(E(a,b)) =c_1(E(a,b)) = c^{\mathrm{EHZ}}(E(a,b)) $, it follows from this and \eqref{bmc} (or indeed Lemma \ref{direct}) that 
$$\mathrm{GW}(E(a,b)+ E(c,d))^{\frac{1}{2}} \geq \mathrm{GW}(E(a,b))^{\frac{1}{2}} + \mathrm{GW}(E(c,d))^{\frac{1}{2}}.$$  In other words, the Gromov  width satisfies the symplectic Brunn-Minkowski inequaity on ellipsoids. As pointed out to  the authors by Ostrover, it is not known whether this holds for general convex bodies. 
\end{remark}

\section{Proof of Theorem \ref{one}.}
First we show that it suffices to prove the following result in four dimensions.
\begin{proposition}\label{4d} For  each $k \geq 2$ there are positive numbers $a,\,b,\, c$ and  $d$ such that  
\begin{equation}
\label{failure}
c_k(E(a, b)+E(c, d))^{\frac{1}{2}} < c_k(E(a, b))^{\frac{1}{2}}+ c_k(E(c, d))^{\frac{1}{2}}.
\end{equation}
\end{proposition}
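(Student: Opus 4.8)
The plan is to produce, for each $k\ge 2$, a pair of four–dimensional symplectic ellipsoids together with a single lattice vector that already witnesses the failure of the inequality. Fix $k$ and a vector $v^\ast=(v_1,v_2)$ with $v_1+v_2=k$ and $v_j\in\{0\}\cup\NN$. Since $E(a,b)+E(c,d)$ is a convex toric domain (by the lemma above), the Gutt--Hutchings formula gives
\[
c_k\bigl(E(a,b)+E(c,d)\bigr)=\min_{v}\|v\|^*_{\Omega}\ \le\ \|v^\ast\|^*_{\Omega},
\]
so it is enough to arrange that
\[
\|v^\ast\|^*_{\Omega}\ <\ \Bigl(\sqrt{c_k(E(a,b))}+\sqrt{c_k(E(c,d))}\Bigr)^{2}.
\]
The left side is computed from Lemma \ref{direct} and the right side from the sorted–sequence formula for the capacities of an ellipsoid; note that $v^\ast$ need not be the actual minimizer for the sum. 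This reduces the proposition to two explicit computations, one for $k$ even and one for $k$ odd.

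For $k=2m$ I would take the transposed pair $E(a,b)=E(1,\sqrt{1+s})$ and $E(c,d)=E(\sqrt{1+s},1)$ with $0<s<\tfrac1m$ (say $s=\tfrac1{2m}$), and $v^\ast=(m,m)$. Because $s<\tfrac1m$ keeps $j(1+s)<j+1$ for $j\le m$, the element in position $2m$ of $\operatorname{Sort}\{\NN\pi\cup\NN\pi(1+s)\}$ is $m\pi(1+s)$, so $c_{2m}(E(1,\sqrt{1+s}))=c_{2m}(E(\sqrt{1+s},1))=m\pi(1+s)$. Putting $E(\sqrt{1+s},1)$ in the role of $E(a,b)$ in Lemma \ref{direct}, one has $c/a<d/b$ and the quantities $v_1c^2-v_2d^2$ and $v_2b^2-v_1a^2$ are both equal to $-ms<0$, so $f_0=1$ lies in $(c/a,d/b)$ and the first branch applies, giving $\|v^\ast\|^*_{\Omega}=2m\pi(2+s)$. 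Since $2m\pi(2+s)<4m\pi(1+s)$ for $s>0$, this is \eqref{failure}.

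For $k=2m+1$ I would instead take the parallel pair $E(a,b)=E(1,\sqrt{1+x})$ and $E(c,d)=E(1,\sqrt{1+y})$ with $0<x<\tfrac1m<y<\tfrac2m$ and $x+y=\tfrac2m$ (say $x=\tfrac1{2m}$, $y=\tfrac3{2m}$), and $v^\ast=(m+1,m)$. From $x<\tfrac1m$ one reads off $c_{2m+1}(E(1,\sqrt{1+x}))=(m+1)\pi$, and from $m+1<m(1+y)<m+2$ (which is exactly $\tfrac1m<y<\tfrac2m$) one reads off $c_{2m+1}(E(1,\sqrt{1+y}))=m\pi(1+y)>(m+1)\pi$; these two sorted–sequence identifications are the heart of the matter. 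For the sum, $c/a=1<d/b$ and $f_0=(my-1)/(mx-1)=1$ by the choice $x+y=\tfrac2m$, so $f_0\notin(c/a,d/b)$ and the "otherwise" branch of Lemma \ref{direct} applies; since $v_2b^2-v_1a^2=mx-1<0$ the maximizing endpoint is $\|v^\ast\|^*_{\Omega}=v_1\pi(a+c)^2=4(m+1)\pi$ (equivalently $m(\sqrt{1+x}+\sqrt{1+y})^2\le4(m+1)$, which after substituting $x+y=\tfrac2m$ becomes $xy\le\tfrac1{m^2}$, true by AM--GM and strict because $x\ne y$). As $c_{2m+1}(E(c,d))=m\pi(1+y)>(m+1)\pi$, the right side of \eqref{failure} exceeds $4(m+1)\pi\ge c_{2m+1}\bigl(E(a,b)+E(c,d)\bigr)$, which is \eqref{failure}.

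Reducing Theorem \ref{one} to Proposition \ref{4d} is then routine: given four–dimensional $E_1,E_2$ satisfying \eqref{failure}, set $K_i=E_i\times B^{2n-4}(R)\subset\RR^{2n}$; these are convex and toric, $K_1+K_2=(E_1+E_2)\times B^{2n-4}(2R)$, and once $R$ is large the Gutt--Hutchings minimum for each of $K_1$, $K_2$ and $K_1+K_2$ is attained at a vector supported on the first two coordinates, so $c_k(K_i)=c_k(E_i)$ and $c_k(K_1+K_2)=c_k(E_1+E_2)$, whence \eqref{nobmc}. The step I expect to be the real obstacle is the exact evaluation of the ellipsoid capacities $c_k(E_i)$ from the sorted sequence in each parity case: the rest is a mechanical application of Lemma \ref{direct}, but the whole construction—and in particular the parameter ranges $s<\tfrac1m$ and $\tfrac1m<y<\tfrac2m$—is engineered precisely so that those sorted sequences land where claimed.
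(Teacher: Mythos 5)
Your proposal is correct and relies on the same machinery as the paper (the Gutt--Hutchings formula for convex toric domains, Lemma \ref{direct} for the Minkowski sum of two four-dimensional ellipsoids, the sorted-sequence formula for ellipsoid capacities, and the same product-with-balls reduction of Theorem \ref{one} to dimension four), but it follows a genuinely cleaner route at one key step. The paper computes $c_k(E(a,b)+E(c,d))$ \emph{exactly} for its examples, which requires verifying that the chosen lattice vector realizes the minimum over all $(v_1,v_2)$ with $v_1+v_2=k$; you instead only need the trivial upper bound $c_k\bigl(E(a,b)+E(c,d)\bigr)\le \|v^\ast\|^*_\Omega$ for a single well-chosen $v^\ast$, which suffices to produce the strict inequality and completely sidesteps that minimizer case analysis. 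Your explicit examples also differ: for even $k=2m$ you use the transposed pair $E(1,\sqrt{1+s})$, $E(\sqrt{1+s},1)$ with $s=\tfrac1{2m}$ (the paper uses $E(1+1/k,1)$, $E(1,1+1/k)$), while for odd $k=2m+1$ you use two parallel ellipsoids $E(1,\sqrt{1+x})$, $E(1,\sqrt{1+y})$ with $x+y=\tfrac2m$ (the paper uses the ball $E(1,1)$ and $E(1-1/k,1)$). I checked the sorted-sequence evaluations and the application of Lemma \ref{direct} in both parity cases and they are correct; in the odd case you have a harmless sign typo, $f_0$ should be $(1-my)/(mx-1)$ rather than $(my-1)/(mx-1)$, but since $x+y=2/m$ forces $1-my=mx-1$ the value is $1$ as you assert, it is not in the open interval $(c/a,d/b)$, and the ``otherwise'' branch fires as claimed.
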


\bigskip

\subsection{Proof that Proposition \ref{4d} implies Theorem \ref{one}} For convex toric domains, $X \subset \RR^{2n}$ and $Y \subset \RR^{2m}$, the Gutt--Hutchings capacities have the following product property (see \cite{gh}, Remark 1.10): 
$$
c_k(X \times Y) = \min_{i+j=k}\{ c_i(X) + c_j(Y)\}.
$$
Hence for any convex toric domain $X \subset \RR^{2n}$ and any $k \in \NN$  we have 
$$
c_k(X \times B^m(R)) = c_k(X),
$$
for all $m \in \NN$ and  $R >\sqrt{c_k(X)/\pi}.$ 

Fix $k>1$ and $n>2$. Choose  $a,\,b,\, c$ and  $d$ as in Proposition \ref{4d} and choose $$R > \sqrt{c_k(E(a, b)+E(c, d))/\pi}.$$
For $K_1= E(a,b) \times B^{2n-4}(R)$ and $K_2= E(c,d) \times B^{2n-4}(R)$ we then have 
\begin{eqnarray*}
c_k(K_1+K_2) & = & c_k((E(a,b) \times B^{2n-4}(R))+(E(c,d) \times B^{2n-4}(R))) \\
{} & = & c_k((E(a,b)+E(c,d)) \times B^{2n-4}(2R))  \\
{} & = & c_k(E(a,b)+E(c,d)).
\end{eqnarray*}
Since $c_k(E(a,b) \times B^{2n-4}(R)) = c_k(E(a,b))$ and $c_k(E(c,d) \times B^{2n-4}(R)) = c_k(E(c,d))$ it follows that Proposition \ref{4d} implies Theorem \ref{one}.

\subsection{Proof of Proposition \ref{4d}} 

We construct two families of examples for which inequality \eqref{failure} holds. One family for even values of $k$ and another for odd values of $k$. 

\begin{proposition}\label{even} For every even $k $, we have
\begin{equation*}
\label{}
    c_k\left(E\left(1+1/k, 1\right)+E\left(1, 1+1/k\right)\right)^{\frac{1}{2}} < c_k\left(E\left(1+1/k\right),1\right)^{\frac{1}{2}}+c_k\left(E\left(1, 1+1/k\right)\right)^{\frac{1}{2}}.
\end{equation*}
\end{proposition}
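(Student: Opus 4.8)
The plan is to compute both sides of the asserted inequality essentially explicitly. Write $t \eqdef 1+1/k$, so the two ellipsoids are $E(t,1)$ and $E(1,t)$. Their Minkowski sum is covered by Lemma \ref{direct} applied with $a=d=t$ and $b=c=1$: indeed $\tfrac ca = \tfrac 1t < t = \tfrac db$, so the standing hypothesis of that lemma holds. Note also that $E(1,t)$ is obtained from $E(t,1)$ by interchanging the two $\CC$-factors, hence $c_k(E(1,t)) = c_k(E(t,1))$.

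First I would pin down the right-hand side. By the ellipsoid formula recalled above, $c_k(E(t,1)) = \pi\min\setc{\max\{t^2v_1,\,v_2\}}{v_1+v_2=k,\ v_i\in\{0\}\cup\NN}$. The two competing linear functions $j\mapsto t^2 j$ and $j\mapsto k-j$ cross at $j^\ast = k/(t^2+1)$, and a short computation gives $j^\ast = \tfrac{k-1}{2} + \tfrac{k+1}{2(2k^2+2k+1)}$, which lies strictly between $\tfrac k2-\tfrac12$ and $\tfrac k2$. Since $k$ is even, the two nearest integers are $\tfrac k2-1$ and $\tfrac k2$; evaluating the objective at both shows the minimum is attained at $v=(\tfrac k2-1,\ \tfrac k2+1)$ with value $\pi(\tfrac k2+1)$. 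Hence $c_k(E(t,1)) = c_k(E(1,t)) = \tfrac{(k+2)\pi}{2}$, and the right-hand side of the asserted inequality equals $2\sqrt{(k+2)\pi/2} = \sqrt{2(k+2)\pi}$.

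Next I would bound the left-hand side from above. Since $k$ is even, $v=(\tfrac k2,\tfrac k2)$ is an admissible weight vector, so the Gutt–Hutchings formula gives $c_k(E(t,1)+E(1,t)) \le \|v\|^\ast_\Omega$ for $\Omega = \mu(E(t,1)+E(1,t))$. With $a=d=t$, $b=c=1$ one has $\frac{v_1c^2-v_2d^2}{v_2b^2-v_1a^2} = \frac{(k/2)(1-t^2)}{(k/2)(1-t^2)} = 1 \in \bigl(\tfrac1t,\,t\bigr)$ and $v_2b^2-v_1a^2 = \tfrac k2(1-t^2) < 0$, so the first case of Lemma \ref{direct} applies. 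Substituting and using $\frac{1-t^4}{1-t^2} = 1+t^2$ yields $\|v\|^\ast_\Omega = \pi k(1+t^2) = \pi\bigl(2k+2+\tfrac1k\bigr)$.

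Finally I would compare the two: $c_k(E(t,1)+E(1,t)) \le \pi\bigl(2k+2+\tfrac1k\bigr) < \pi(2k+4) = 2(k+2)\pi$, the strict inequality holding simply because $\tfrac1k<2$. Taking square roots gives $c_k(E(t,1)+E(1,t))^{1/2} < \sqrt{2(k+2)\pi} = c_k(E(t,1))^{1/2}+c_k(E(1,t))^{1/2}$, which is the claim. There is no serious obstacle here: once the optimal weight vectors $(\tfrac k2-1,\tfrac k2+1)$ for the ellipsoids and $(\tfrac k2,\tfrac k2)$ for the sum are identified, everything reduces to elementary algebra. The only points demanding care are verifying that the hypotheses of Lemma \ref{direct} genuinely hold for $v=(\tfrac k2,\tfrac k2)$ — which they do, ultimately because the configuration $a=d$, $b=c$ is symmetric — and tracking the parity of $k$ so that the chosen weight vectors have integer entries.
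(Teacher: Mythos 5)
Your proof is correct, and it follows the same core strategy as the paper: apply Lemma \ref{direct} to the Minkowski sum with the weight vector $v=(k/2,k/2)$, compute $c_k$ of each ellipsoid, and compare. You make one genuine simplification worth noting. The paper proves the exact value $c_k\bigl(E(t,1)+E(1,t)\bigr)=\pi(2k+2+1/k)$, and to do so it must additionally verify that \emph{every} other admissible $v=(v_1,v_2)$ with $v_1+v_2=k$ gives $\|v\|^*_\Omega\geq\pi(2k+2+1/k)$. You instead observe that the Gutt--Hutchings formula is a minimum, so a single evaluation already provides the upper bound $c_k\bigl(E(t,1)+E(1,t)\bigr)\leq\pi(2k+2+1/k)$, and an upper bound on the left-hand side is all that the strict inequality requires. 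This lets you skip the paper's lower-bound estimate entirely. (A minor cosmetic difference: you compute $c_k(E(t,1))=\pi(k/2+1)$ by locating the crossover $j^*=k/(t^2+1)$ and evaluating the nearest integers, whereas the paper reads the value straight off the sorted-sequence formula \eqref{ellipse}; both are valid and elementary.) Your application of Lemma \ref{direct}, including the checks that $1\in(1/t,t)$ and $v_2b^2-v_1a^2<0$, and the algebraic reduction $(1-t^4)/(1-t^2)=1+t^2$ yielding $\pi k(1+t^2)=\pi(2k+2+1/k)$, are all correct.
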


\bigskip

\begin{proof}
By \eqref{ellipse}, we have 
\begin{equation*}
 c_k(E(1+1/k,1)) = c_k(E(1,1+1/k)) =\pi\left(k/2+1\right),
\end{equation*}
and hence 
\begin{equation*}
\left(c_{k}(E(1,1+1/k))^{\frac{1}{2}} + c_{k}(E(1+1/k,1))^{\frac{1}{2}}\right)^{2} =  \pi \left( 2k+4\right).
\end{equation*}
With this, it suffices to prove the following.
\begin{equation}
\label{ceven}
c_k(E(1,1+1/k)+E(1+1/k,1)) = \pi \left(2k + 2 + \frac{1}{k}\right).
\end{equation}
We first apply Lemma \ref{direct} to $E\left(1+1/k, 1\right)+E\left(1, 1+1/k\right)$ and  the vector $v=(k/2,k/2)$.  In this case 
$$\displaystyle{\frac{v_1c^2-v_2d^2}{v_2b^2-v_1a^2}}=1 \in \left(\frac{1}{1+1/k}, \,1+1/k \right) = \left(\frac{c}{a},\,\frac{d}{b} \right),$$ 
and $v_2b^2-v_1a^2 =\frac{k}{2}(-\frac{2}{k} -\frac{1}{k^2}) <0$. Hence, Lemma \ref{direct} implies that  
\begin{equation*}
\label{ }
\|(k/2,k/2)\|^*_{\Omega} = \pi \left(2k + 2 + \frac{1}{k}\right).
\end{equation*}
To verify \eqref{ceven}, it remains to show that for any other $v=(v_1, v_2)$ with $v_1+v_2=k$ we have $$\|v\|^*_{\Omega} \geq \pi \left(2k + 2 + \frac{1}{k}\right).$$
For $v \neq (k/2,k/2)$, either $v_1>k/2$ or $v_2>k/2$. Together with \eqref{max} this yields
\begin{eqnarray*}
\|v\|^*_{\Omega} & = & \max_{\psi \in [0, \pi/2]}\{\pi v_1(g(\psi))^2+\pi v_2(h(\psi))^2\} \\
{} & \geq & \pi \left(\frac{k}{2}+1\right)\left(2+\frac{1}{k}\right)^2  \\
{} & > & \pi \left( 2k+6 \right)\\
 {} & > & \pi \left(2k + 2 + \frac{1}{k}\right),
\end{eqnarray*}
as required.
\end{proof}

\begin{proposition}\label{odd} For every odd $k>1$, we have
\begin{equation*}
\label{}
    c_k\left(E(1, 1)+E(1-1/k, 1)\right)^{\frac{1}{2}} < c_k\left(E\left(1, 1\right)\right)^{\frac{1}{2}}+c_k\left(E\left(1-1/k, 1\right)\right)^{\frac{1}{2}}.
\end{equation*}
\end{proposition}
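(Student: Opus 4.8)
The plan is to follow the template of the proof of Proposition~\ref{even}, with the simplification that here only an \emph{upper} bound on the capacity of the Minkowski sum is needed. First I would evaluate the right-hand side using formula \eqref{ellipse}. Sorting $\NN\pi \cup \NN\pi$ produces $\pi,\pi,2\pi,2\pi,\dots$, so $c_k(E(1,1)) = \pi\lceil k/2\rceil = \tfrac{k+1}{2}\pi$ for odd $k$. For $c_k(E(1-1/k,1))$ I set $a = \tfrac{k-1}{k}$ and count the terms of $\NN\pi a^{2} \cup \NN\pi$ not exceeding $\tfrac{k-1}{2}\pi$: there are $\tfrac{k-1}{2}$ multiples of $\pi$, while $\tfrac{k-1}{2a^{2}} = \tfrac{k^{2}}{2(k-1)} = \tfrac{k+1}{2} + \tfrac{1}{2(k-1)}$ has integer part $\tfrac{k+1}{2}$ (this uses that $k$ is odd), so there are $\tfrac{k+1}{2}$ multiples of $\pi a^{2}$; the total is $k$, and since $\tfrac{k+1}{2}a^{2} = \tfrac{(k^{2}-1)(k-1)}{2k^{2}} < \tfrac{k-1}{2}$ the $k$-th term equals $\tfrac{k-1}{2}\pi$. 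Hence $c_k(E(1-1/k,1)) = \tfrac{k-1}{2}\pi$, and
\[
\left(c_k(E(1,1))^{1/2}+c_k(E(1-1/k,1))^{1/2}\right)^{2} = \frac{\pi}{2}\left(\sqrt{k+1}+\sqrt{k-1}\right)^{2} = \pi\left(k+\sqrt{k^{2}-1}\right).
\]

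Next I would bound the left-hand side from above by applying the Gutt--Hutchings minimum formula to the single vector $v = \left(\tfrac{k+1}{2},\tfrac{k-1}{2}\right)$, which is integral because $k$ is odd. Set $\Omega = \mu(E(1,1)+E(1-1/k,1))$; since $\tfrac{c}{a} = 1-\tfrac1k < 1 = \tfrac{d}{b}$, Lemma~\ref{direct} applies. For this $v$ one has $v_2b^{2} - v_1a^{2} = -1 < 0$, but $f_0 = \tfrac{v_1c^{2} - v_2d^{2}}{v_2b^{2} - v_1a^{2}} = \tfrac{k-1}{2k^{2}}$ is smaller than $\tfrac{c}{a} = \tfrac{k-1}{k}$, hence lies outside $\left(\tfrac{c}{a},\tfrac{d}{b}\right)$. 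We therefore land in the ``otherwise'' case; and since $(k+1)(2k-1)^{2} - 4k^{2}(k-1) = 4k^{2} - 3k + 1 > 0$ the first entry dominates, giving
\[
\|v\|^{*}_{\Omega} = \max\left\{\tfrac{k+1}{2}\pi\left(\tfrac{2k-1}{k}\right)^{2},\ \tfrac{k-1}{2}\pi\cdot 2^{2}\right\} = \frac{(k+1)(2k-1)^{2}}{2k^{2}}\,\pi .
\]
Consequently $c_k(E(1,1)+E(1-1/k,1)) \le \tfrac{(k+1)(2k-1)^{2}}{2k^{2}}\,\pi$.

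It then remains to check the elementary inequality $\tfrac{(k+1)(2k-1)^{2}}{2k^{2}} < k+\sqrt{k^{2}-1}$ for odd $k \ge 3$. Writing the left side as $2k - \tfrac{3k-1}{2k^{2}}$, subtracting $k$, and squaring both (manifestly positive) sides, this reduces to $\tfrac1k + \left(\tfrac{3k-1}{2k^{2}}\right)^{2} < 2$, which holds with room to spare. Combining this with the two displays above gives $c_k(E(1,1)+E(1-1/k,1)) < \pi\bigl(k+\sqrt{k^{2}-1}\bigr) = \left(c_k(E(1,1))^{1/2}+c_k(E(1-1/k,1))^{1/2}\right)^{2}$, which is the assertion.

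I expect the one genuinely delicate point to be the lattice-point count in the first step pinning down $c_k(E(1-1/k,1)) = \tfrac{k-1}{2}\pi$: one must verify both that the relevant floor equals $\tfrac{k+1}{2}$ and that $\tfrac{k-1}{2}\pi$, rather than a multiple of $\pi a^{2}$, is the $k$-th smallest element --- this is exactly where the parameter $1-1/k$ is calibrated, mirroring the role of $1+1/k$ in the even case. Unlike that case, no case analysis over all $v$ with $v_1+v_2 = k$ is needed here, since the one-sided bound on the capacity of the Minkowski sum already suffices.
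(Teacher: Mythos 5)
Your proposal is correct and follows essentially the same route as the paper: both apply Lemma~\ref{direct} to the vector $v=\left(\tfrac{k+1}{2},\tfrac{k-1}{2}\right)$, land in the ``otherwise'' branch, and arrive at the same value $\pi\,\tfrac{(k+1)(2k-1)^2}{2k^2}=\pi(n+1)(2-1/k)^2$. The one genuine streamlining is your observation that the Gutt--Hutchings formula is a minimum, so evaluating the norm at this single $v$ already yields an upper bound on $c_k(E(1,1)+E(1-1/k,1))$ which suffices; the paper instead establishes the exact value by checking all admissible $v$, which is more work than the inequality requires. (Incidentally, your computation $f_0=\tfrac{k-1}{2k^2}>0$ with $f_0<\tfrac{c}{a}$ is the right reason the ``otherwise'' case applies; the paper's displayed claim that this ratio is negative is a small slip that does not affect its conclusion.)
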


\begin{proof} First we show that 
\begin{equation}
\label{codd}
 c_k\left(E(1, 1)+E(1-1/k, 1)\right) = \pi ((k+1)/2)(1-1/k)^2.
\end{equation}
Let $k=2n+1$ for $n>0$. Applying Lemma \ref{direct} to $E(1, 1)+E(1-1/k, 1)$ and the vector $v=(n+1,n)$, we have  $$\displaystyle{\frac{v_1c^2-v_2d^2}{v_2b^2-v_1a^2}}<0,$$ and hence 
\begin{equation*}
\label{ }
\|(n+1,n)\|^*_{\Omega} = \max\{\pi(n+1)(2-1/k)^2, \pi 4 n\}= \pi(n+1)(2-1/k)^2.
\end{equation*}
It remains to show that for any other $v=(v_1, v_2)$ with $v_1+v_2=k$ we have $$\|v\|^*_{\Omega} \geq \pi(n+1)(2-1/k)^2.$$
For $v \neq (n+1, n)$, either $v_1 \geq n+2$ or $v_2 \geq n+1$.
In the first case, we have
\begin{eqnarray*}
\|v\|^*_{\Omega} & = & \max_{\psi \in [0, \pi/2]}\{\pi v_1(g(\psi))^2+\pi v_2(h(\psi))^2\} \\
{} & \geq & \pi (n+2)( g(0))^2  \\
{} &= & \pi (n+2)(2-1/k)^2,   
\end{eqnarray*}
as desired.
In the second case, we have
\begin{eqnarray*}
\|v\|^*_{\Omega} & = & \max_{\psi \in [0, \pi/2]}\{\pi v_1(g(\psi))^2+\pi v_2(h(\psi))^2\} \\
{} & \geq & \pi (n+1)( h(\pi/2))^2  \\
{} &= & \pi (4n+4). 
\end{eqnarray*}
Since $(n+1)(2-1/k)^2 =4n+4- \frac{k+1}{k} + \frac{k+1}{2k^2}$, we are again done. This completes the proof of \eqref{codd}.

By \eqref{ellipse},  we have $c_k(E(1,1))=\pi(n+1)$ and $c_k(E(1-1/k,1)) = \pi n$. It remains for us to show that 
$$
\sqrt{n}+ \sqrt{n + 1}  > \sqrt{n + 1}\left( 2- \frac{1}{2n+1}\right).
$$
But this is equivalent to 
$$
\frac{\sqrt{n}} {\sqrt{n + 1} }  > \frac{2n}{2n+1},
$$
which is trivial to verify for all $n>0$.

\end{proof}

\section{An observation of Ostrover}

The following result is established in \cite{ao}.

\begin{theorem}[\cite{ao}, Corollary 1.7] \label{mean}  Let  $C$ be a symplectic capacity on $(\RR^{2n}, \omega_{std})$ which is normalized by the condition $C(B^{2n}(1)) =\pi$ and satisfies the symplectic Brunn-Minkowski inequality. Then, for every centrally symmetric convex body $K \subset \RR^{2n}$ one has 
\begin{equation*}
\label{ }
C(K) \leq \pi (M(K))^2,
\end{equation*}
where $M(K)$ is the mean-width of $K$.
\end{theorem}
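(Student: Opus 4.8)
The plan is to run the symmetrization argument of Artstein-Avidan and Ostrover alluded to in the section title. The idea is to replace $K$ by its Minkowski average over the unitary group, which is a round ball whose radius is (up to normalization) the mean-width of $K$, and to note that forming this average cannot decrease the capacity. Throughout I use the standard axioms of a symplectic capacity: monotonicity, conformality ($C(\lambda K)=\lambda^2 C(K)$ for $\lambda>0$), and invariance under linear symplectomorphisms — in particular invariance under the unitary group $U(n)$, since every unitary matrix lies in $\mathrm{Sp}(2n,\RR)$ — together with the two hypotheses of the theorem, namely $C(B^{2n}(1))=\pi$ and the symplectic Brunn-Minkowski inequality.

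First I would introduce the averaged body. After a translation, which changes neither $C$ nor $M$ and (for centrally symmetric $K$) simply places the center of $K$ at the origin, assume $0\in\mathrm{int}(K)$. Let $\widetilde K$ be the convex body whose support function is
\[
h_{\widetilde K}(u)=\int_{U(n)} h_K(A^{-1}u)\,dA,
\]
the Minkowski average of the family $\{A(K):A\in U(n)\}$ against normalized Haar measure. Since $U(n)$ acts transitively on the unit sphere $S^{2n-1}\subset\CC^n$, the pushforward of Haar measure under $A\mapsto A^{-1}u$ is the rotation-invariant probability measure on $S^{2n-1}$, independently of $u$; hence $h_{\widetilde K}$ is a constant $r$ and
\[
\widetilde K=B^{2n}(r),\qquad r=\frac{1}{\mathrm{vol}(S^{2n-1})}\int_{S^{2n-1}} h_K(v)\,dv.
\]
This $r$ is exactly $M(K)$ in the normalization for which $M(B^{2n}(1))=1$ (for centrally symmetric $K$ it is also half the classical mean-width), so the theorem reduces to the inequality $C(K)\le C(\widetilde K)$.

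To prove $C(K)\le C(\widetilde K)$ I would approximate $\widetilde K$ by finite Minkowski averages. Iterating the symplectic Brunn-Minkowski inequality and using conformality, for any $A_1,\dots,A_N\in U(n)$ one gets
\[
C\!\left(\tfrac1N\big(A_1(K)+\cdots+A_N(K)\big)\right)^{1/2}\ \ge\ \tfrac1N\sum_{i=1}^N C\big(A_i(K)\big)^{1/2}\ =\ C(K)^{1/2},
\]
the last equality using $U(n)$-invariance of $C$. It is standard that, for a suitable choice of the $A_i$, the bodies $L_N:=\tfrac1N\sum_{i=1}^N A_i(K)$ converge to $\widetilde K$ in the Hausdorff metric. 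Since $\widetilde K$ contains a ball about the origin, Hausdorff-closeness forces $L_N\subseteq(1+\varepsilon_N)\widetilde K$ with $\varepsilon_N\to 0$, so by monotonicity and conformality $C(L_N)\le(1+\varepsilon_N)^2\,C(\widetilde K)$. Combined with the previous display this gives
\[
C(K)\ \le\ C(L_N)\ \le\ (1+\varepsilon_N)^2\,C(\widetilde K),
\]
and letting $N\to\infty$ yields $C(K)\le C(\widetilde K)=C(B^{2n}(M(K)))=\pi\,M(K)^2$, which is the assertion. (Central symmetry enters only through the conventional normalization of the mean-width; the inequality $C(K)\le C(B^{2n}(M(K)))$ holds for every convex body.)

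The main obstacle is the passage from the two-body inequality to the continuous average. One must iterate the Brunn-Minkowski inequality to $N$ summands — routine, since every partial Minkowski sum is again a compact convex body with nonempty interior, so the hypothesis applies — and then pass to the Hausdorff limit using only the available axioms; here monotonicity together with conformality suffices, because $\widetilde K$ contains a ball about the origin and therefore every convex body Hausdorff-close to $\widetilde K$ is sandwiched between two dilates of it. The remaining ingredients (transitivity of $U(n)$ on $S^{2n-1}$, the identification of a $U(n)$-invariant convex body with a ball, and $C(B^{2n}(r))=\pi r^2$) are immediate.
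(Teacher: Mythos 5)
The paper does not prove this theorem; it is quoted verbatim from \cite{ao} (Corollary 1.7), so there is no internal proof to compare against. Your $U(n)$-symmetrization argument is correct and is essentially the argument used by Artstein-Avidan and Ostrover: average $K$ over $U(n)\subset \mathrm{Sp}(2n,\RR)$ to produce the ball $B^{2n}(M(K))$, apply the iterated Brunn--Minkowski inequality together with conformality and $U(n)$-invariance to the finite Minkowski averages, and pass to the Hausdorff limit using monotonicity. All the supporting facts you invoke (transitivity of $U(n)$ on $S^{2n-1}$, identification of the Minkowski integral with the ball of radius $\int_{S^{2n-1}} h_K\,d\sigma = M(K)$, approximation of the Riesz integral by finite averages, and the dilation sandwich from Hausdorff closeness) are standard and correctly deployed.
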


Here, the mean-width of $K$ is defined by $$M(K) = \int_{S^{2n-1}} \max_{y \in K}\langle x,y\rangle \sigma(dx),$$  where $\sigma$ is the rotationally invariant probability measure on the unit sphere $S^{2n-1} \subset \RR^{2n}$.

It was observed by Ostrover that Theorem \ref{mean} can be used, in the following manner,  to prove Theorem \ref{one} except when $k$ is equal to $3$, $5$, or $7$.  Arguing as in Section 3.1, we can restrict ourselves to the case $n=2$. Consider the symplectic polydisk $$P(1,1) =\left\{ (z_1,z_2) \in \CC^2  \,\,\colon\,\, |z_1|<1, |z_2| <1\right\}.$$
For all $k \in \NN$ we have 
\begin{equation*}
\label{ }
c_k(P(1,1)) = \pi k,
\end{equation*}
and a straightforward computation yields 
\begin{equation*}
\label{ }
M(P(1,1)) =\frac{4}{3}.
\end{equation*}

Set $\bar{c}_k = \frac{\pi}{ \lfloor \frac{k+1}{2} \rfloor } c_k$. It follows from  Theorem \ref{mean} and the formulas above, that $\bar{c}_k$
(and hence $c_k$) fails to satisfy the symplectic Brunn-Minkowski inequality whenever
\begin{equation*}
\label{ }
\frac{k}{ \lfloor \frac{k+1}{2} \rfloor } > \frac{16}{9}.
\end{equation*}
This holds for all even values of $k$ and all odd values of $k$ greater than $7$.


\begin{thebibliography} {99}

\bibitem{ao} S. Artstein-Avidan, Y. Ostrover, A Brunn-Minkowski inequality for symplectic capacities of convex domains, {\em IMRN}, \textbf{13} (2008).


\bibitem{chls} K. Cieliebak, H. Hofer, J. Latschev, and F. Schlenk. Quantitative symplectic geometry. In Dynamics, ergodic theory, and geometry, volume 54 of Math. Sci. Res. Inst. Publ., pages 1--44. Cambridge Univ. Press, Cambridge, 2007.


\bibitem{eh1} I. Ekeland and H. Hofer, Symplectic topology and
Hamiltonian dynamics, {\it Math. Z.}, \textbf{200} (1989), 355--378.

\bibitem{eh2} I. Ekeland and H. Hofer, Symplectic topology and
Hamiltonian dynamics II, {\it Math. Z.}, \textbf{203} (1990), 553--567.


\bibitem{gh} J. Gutt, M. Hutchings. {\it Algebraic \& Geometric Topology} \textbf{18} (6), (2018), 3537--3600. 


\bibitem{cy} Y. Yan, G.S. Chirikjian,  Closed-form characterization of the Minkowski sum and difference of two ellipsoids,  {\em Geometriae Dedicata}, \text{\bf 177} (2015).  103--128.

\end{thebibliography}
\end{document}